\newcommand{\bdry}[1]{\partial #1}
\newcommand{\bgset}[1]{\big\{#1\big\}}
\newcommand{\A}{{\cal A}}
\newcommand{\F}{{\cal F}}
\newcommand{\comp}{\circ}
\newcommand{\dist}[2]{\text{dist}\, (#1,#2)}
\newcommand{\eps}{\varepsilon}
\newcommand{\half}{\frac{1}{2}}
\newcommand{\id}[1][]{id_{\, #1}}
\newcommand{\incl}{\subset}
\newcommand{\N}{\mathbb N}
\newcommand{\norm}[2][]{\left\|#2\right\|_{#1}}
\renewcommand{\O}{{\mathcal O}}
\renewcommand{\o}{\text{o}}
\newcommand{\PS}[1]{$(\text{PS})_{#1}$}
\newcommand{\pnorm}[2][]{\if #1'' \left|#2\right|_p \else \left|#2\right|_{#1} \fi}
\newcommand{\R}{\mathbb R}
\newcommand{\RP}{\R \text{P}}
\newcommand{\restr}[2]{\left.#1\right|_{#2}}
\newcommand{\seq}[1]{\left(#1\right)}
\newcommand{\set}[1]{\left\{#1\right\}}
\newcommand{\wto}{\rightharpoonup}
\newcommand{\Z}{\mathbb Z}
\DeclareMathOperator{\volume}{vol}
\newenvironment{properties}[1]{\begin{enumerate}

}{\end{enumerate}}
\newtheorem{lemma}{Lemma}[section]
\newtheorem{proposition}[lemma]{Proposition}
\newtheorem{theorem}[lemma]{Theorem}
\numberwithin{equation}{section}
\title{\bf Critical fractional $p$-Laplacian problems \\ with possibly vanishing potentials\thanks{{\em MSC2010:} Primary 35R11, 35B33, Secondary 58E05
\newline \indent\; {\em Key Words and Phrases:} fractional $p$-Laplacian, critical exponent, external potentials, nontrivial solutions, generalized linking, $\Z_2$-cohomological index}}
\author{\bf Kanishka Perera\\
Department of Mathematical Sciences\\
Florida Institute of Technology\\
Melbourne, FL 32901, USA\\
[\bigskipamount]
\bf Marco Squassina\thanks{Research partially supported
by ``Gruppo Nazionale per l'Analisi Matematica, la Probabilit\`a e le loro Applicazioni (INdAM)'.}\\
Dipartimento di Informatica\\
Universit\`a degli Studi di Verona\\
37134 Verona, Italy\\
[\bigskipamount]
\bf Yang Yang\thanks{Research supported by NSFC-Tian Yuan Special Foundation (No.\ 11226116), Natural Science Foundation of Jiangsu Province of China for Young Scholars (No.\ BK2012109), and the China Scholarship Council (No.\ 201208320435).}\\
School of Science\\
Jiangnan University\\
Wuxi, Jiangsu, 214122, China}
\date{}
\begin{document}

\maketitle

\begin{abstract}
We obtain nontrivial solutions of a critical fractional $p$-Laplacian equation in the whole space and with possibly vanishing potentials. In addition to the usual difficulty of the lack of compactness associated with problems involving critical Sobolev exponents, the problem is further complicated by the absence of a direct sum decomposition suitable for applying classical linking arguments. We overcome this difficulty using a generalized linking construction based on the $\Z_2$-cohomological index.
\end{abstract}

\section{Introduction and main results}

For $p \in (1,\infty)$, $s \in (0,1)$, and $N > sp$, the fractional $p$-Laplacian is the nonlinear nonlocal operator defined on smooth functions by
\begin{equation} \label{1}
(- \Delta)_p^s\, u(x) = 2\, \lim_{\eps \searrow 0}\, \int_{\R^N \setminus B_\eps(x)} \frac{|u(x) - u(y)|^{p-2}\, (u(x) - u(y))}{|x - y|^{N+sp}}\, dy, \quad x \in \R^N.
\end{equation}
This definition is consistent, up to a normalization constant depending on $N$ and $s$, with the usual definition of the linear fractional Laplacian $(- \Delta)^s$ when $p = 2$. Some motivations that have led to the study of these kind of operators can be found in Caffarelli \cite{Ca}.

\noindent
The operator $(- \Delta)_p^s$ leads naturally to the quasilinear problem
\[
\begin{cases}
(- \Delta)_p^s\, u = f(x,u)  & \text{in $\Omega$} \\[2pt]
\, u = 0  & \text{in $\R^N \setminus \Omega$},
\end{cases}
\]
where $\Omega$ is a domain in $\R^N$. There is currently a rapidly growing literature on this problem when $\Omega$ is bounded with Lipschitz boundary. In particular, fractional $p$-eigenvalue problems have been studied in \cite{FrPa,MR3245079,MR3148135,PeSqYa3}, regularity theory in \cite{BP,DiKuPa,IaMoSq}, existence theory in the subcritical case in \cite{IaLiPeSq}, and the critical case in \cite{PeSqYa2}.

\noindent
The corresponding problem in the whole space was recently considered in Lehrer et al.\! \cite{MR3299115} and Torres \cite{To}. In \cite{To}, the equation
\begin{equation} \label{2}
(- \Delta)_p^s\, u + V(x)\, |u|^{p-2}\, u = f(x,u), \quad x \in \R^N
\end{equation}
was studied when the potential $V \in C(\R^N)$ satisfies
\[
\inf_{x \in \R^N}\, V(x) > 0, \qquad \mu\left(\set{x \in \R^N : V(x) \le M}\right) < + \infty \quad \forall M > 0,
\]
where $\mu$ denotes the Lebesgue measure in $\R^N$, and a nontrivial solution was obtained when the nonlinearity $f$ is $p$-superlinear and subcritical.

\noindent
Equation \eqref{2} reduces to the well-known fractional Schr\"{o}dinger equation
\begin{equation} \label{3}
(- \Delta)^s\, u + V(x)\, u = f(x,u)
\end{equation}
when $p = 2$. This equation, introduced by Laskin \cite{MR1755089,MR1948569}, is an important model in fractional quantum mechanics and comes from an expansion of the Feynman path integral from Brownian-like to L\'{e}vy-like quantum mechanical paths. When $s = 1$, the L\'{e}vy dynamics becomes the Brownian dynamics and equation \eqref{3} reduces to the classical Schr\"{o}dinger equation. The fractional Schr\"{o}dinger equation has been widely investigated during the last decade and positive solutions have been obtained under various assumptions on $V$ and $f$ (see, e.g., \cite{MR2953151,MR3002595,MR3112523,MR3059423,MR3156081,
MR3153832,To2,ZhLi} and the references therein).

\noindent
In the present paper we investigate existence of nontrivial solutions of the equation
\begin{equation} \label{4}
(- \Delta)_p^s\, u + V(x)\, |u|^{p-2}\, u = \lambda K(x)\, |u|^{p-2}\, u + \mu L(x)\, |u|^{q-2}\, u + |u|^{p_s^\ast - 2}\, u, \quad x \in \R^N,
\end{equation}
where $p_s^\ast = Np/(N - sp)$ is the fractional critical Sobolev exponent, $q \in (p,p_s^\ast)$, $V,\, K,\, L$ are positive continuous functions, and $\lambda \in \R$,
$\mu > 0$ are parameters. The semi-linear local case $p = 2,\, s = 1$ of this problem has been extensively studied in the literature, where the main feature is to impose conditions on $V$, $K$ and $L$ to gain some compactness (see, e.g., \cite{MR695535,MR2100913,MR2174974,MR2187794,MR2276857,MR2602152,MR3003299}). In the quasilinear case $p \ne 2$, in addition to the usual difficulty of the lack of compactness, this problem is further complicated by the absence of a direct sum decomposition suitable for applying the classical linking theorem of Rabinowitz \cite{MR0488128}. We will overcome this difficulty by using a generalized linking theorem based on the $\Z_2$-cohomological index.
\vskip3pt
\noindent
We shall assume that $V$, $K$ and $L$ are positive continuous functions on $\R^N$ satisfying:
\begin{description}
\item[$(H_1)$] $K$ and $L$ are bounded;
\item[$(H_2)$] we have:
    \[
    \lim_{|x| \to \infty}\, \frac{K(x)}{V(x)} = 0, \qquad \lim_{|x| \to \infty}\, \frac{L(x)}{V(x)^{[Np-(N-sp)q]/sp^2}} = 0.
    \]
\end{description}
Notice that these conditions allow possibly different behaviour of the potentials at infinity. If the potential $V$ is bounded, then $K$ and $L$ need to be
vanishing at infinity, while if $V$ is unbounded, then $K$ and $L$ may converge to a constant. Denoting by
\[
[u]_{s,p} = \left(\int_{\R^{2N}} \frac{|u(x) - u(y)|^p}{|x - y|^{N+sp}}\, dx dy\right)^{1/p}
\]
the Gagliardo seminorm of a measurable function $u : \R^N \to \R$ and setting
\[
\pnorm[p,V]{u} = \left(\int_{\R^N} V(x)\, |u|^p\, dx\right)^{1/p},
\]
we work in the reflexive Banach space
\[
X = \set{u \in L^{p_s^\ast}(\R^N) : [u]_{s,p} < \infty,\, \pnorm[p,V]{u} < \infty}
\]
with the norm given by
\[
\norm{u}^p = [u]_{s,p}^p + \pnorm[p,V]{u}^p.
\]
Let also
\[
L^p_K(\R^N) = \set{u : \int_{\R^N} K(x)\, |u|^p\, dx < \infty}, \qquad L^q_L(\R^N) = \set{u : \int_{\R^N} L(x)\, |u|^q\, dx < \infty}
\]
be the weighted Lebesgue spaces, normed by
\[
\pnorm[p,K]{u} = \left(\int_{\R^N} K(x)\, |u|^p\, dx\right)^{1/p}, \qquad \pnorm[q,L]{u} = \left(\int_{\R^N} L(x)\, |u|^q\, dx\right)^{1/q},
\]
respectively. We have the following compactness result.

\begin{proposition} \label{Proposition 1}
If $(H_1)$ and $(H_2)$ hold, then $X$ is compactly embedded in $L^p_K(\R^N) \cap L^q_L(\R^N)$.
\end{proposition}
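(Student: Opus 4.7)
The plan is to take a bounded sequence $(u_n) \subset X$ with $u_n \wto u$ in $X$ and show that $u_n \to u$ strongly in both $L^p_K(\R^N)$ and $L^q_L(\R^N)$. Fix $\eps > 0$; by $(H_2)$ I can choose $R > 0$ so large that $K(x) \le \eps\, V(x)$ and $L(x) \le \eps\, V(x)^{\alpha}$ whenever $|x| > R$, where $\alpha := [Np-(N-sp)q]/sp^2$. I will then split each weighted integral as $\int_{\R^N} = \int_{B_R} + \int_{\R^N \setminus B_R}$ and control the two pieces separately.

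On the ball $B_R$, the restriction map $X \to W^{s,p}(B_R)$ is continuous: the Gagliardo seminorm over $B_R \times B_R$ is dominated by $[u]_{s,p}^p$, and the $L^p(B_R)$-norm is controlled by the $L^{p_s^\ast}(\R^N)$-norm via Hölder, which in turn is bounded by $\|u\|$ through the fractional Sobolev inequality on $\R^N$. Since $B_R$ has Lipschitz boundary and both $p$ and $q$ lie strictly below $p_s^\ast$, the embedding $W^{s,p}(B_R) \hookrightarrow L^r(B_R)$ is compact for $r \in \{p,q\}$; combined with the boundedness of $K$ and $L$, this forces $\int_{B_R} K|u_n-u|^p \to 0$ and $\int_{B_R} L|u_n-u|^q \to 0$ along a subsequence.

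For the tail, the $K$-piece is immediate: $\int_{\R^N \setminus B_R} K|u_n-u|^p \le \eps \int V |u_n-u|^p \le C\eps$. The $L$-piece is the crux and I expect the associated interpolation to be the main obstacle. A direct computation using $p_s^\ast = Np/(N-sp)$ reveals that $\alpha$ coincides with the exponent $\lambda := (p_s^\ast - q)/(p_s^\ast - p) \in (0,1)$ determined by $\lambda p + (1-\lambda)\, p_s^\ast = q$, so I can factor
\[
L|u_n-u|^q \;=\; \bigl(L/V^{\alpha}\bigr) \cdot \bigl(V|u_n-u|^p\bigr)^{\alpha} \cdot |u_n-u|^{(1-\alpha)\, p_s^\ast},
\]
and Hölder with conjugate exponents $1/\alpha$ and $1/(1-\alpha)$ over $\R^N \setminus B_R$ yields
\[
\int_{\R^N \setminus B_R} L|u_n-u|^q \;\le\; \eps \Bigl(\int V|u_n-u|^p\Bigr)^{\alpha} \Bigl(\int |u_n-u|^{p_s^\ast}\Bigr)^{1-\alpha} \;\le\; C\eps,
\]
the $L^{p_s^\ast}$-bound again coming from the fractional Sobolev inequality applied to $u_n - u$. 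Recognizing the identity $\alpha = \lambda$ is the one non-routine observation; everything else is a standard three-epsilon argument, after which combining the inner and outer estimates and sending $\eps \to 0$ completes the proof.
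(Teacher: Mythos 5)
Your proposal is correct and follows essentially the same route as the paper: split $\R^N$ into a ball $B_R$ and its complement using $(H_2)$, use the compact fractional Sobolev embedding on the ball together with the boundedness of $K$ and $L$, and absorb the tail integrals into the $V$-weighted $L^p$ norm and the $L^{p_s^\ast}$ norm via the exponent identity $\alpha p + (1-\alpha)\,p_s^\ast = q$ with $\alpha = [Np-(N-sp)q]/sp^2$. The only cosmetic difference is that you carry out this interpolation on the tail with H\"older's inequality for integrals, whereas the paper uses the equivalent pointwise Young inequality $L(x)\,|t|^q \le \eps\,\bigl(V(x)\,|t|^p + |t|^{p_s^\ast}\bigr)$ and then integrates.
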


\noindent
A weak solution of equation \eqref{4} is a function $u \in X$ satisfying
\begin{multline*}
\int_{\R^{2N}} \frac{|u(x) - u(y)|^{p-2}\, (u(x) - u(y))\, (v(x) - v(y))}{|x - y|^{N+sp}}\, dx dy + \int_{\R^N} V(x)\, |u|^{p-2}\, uv\, dx\\[5pt]
= \int_{\R^N} \left(\lambda K(x)\, |u|^{p-2}\, u + \mu L(x)\, |u|^{q-2}\, u + |u|^{p_s^\ast - 2}\, u\right) v\, dx \quad \forall v \in X.
\end{multline*}
Weak solutions coincide with critical points of the $C^1$-functional
\[
\Phi(u) = \frac{1}{p}\, \norm{u}^p - \int_{\R^N} \left(\frac{\lambda}{p}\, K(x)\, |u|^p + \frac{\mu}{q}\, L(x)\, |u|^q + \frac{1}{p_s^\ast}\, |u|^{p_s^\ast}\right) dx, \quad u \in X.
\]
Recall that $\Phi$ satisfies the Palais-Smale compactness condition at the level $c \in \R$, or \PS{c} for short, if every sequence $\seq{u_j} \subset X$ such that $\Phi(u_j) \to c$ and $\Phi'(u_j) \to 0$, called a \PS{c} sequence, has a convergent subsequence. Let
\begin{equation} \label{5}
S_V = \inf_{u \in X \setminus \set{0}}\, \frac{\norm{u}^p}{\pnorm[p_s^\ast]{u}^p} > 0,
\end{equation}
where $\pnorm[p_s^\ast]{\cdot}$ is the standard norm in $L^{p_s^\ast}(\R^N)$. Our existence results will be based on the following proposition.

\begin{proposition} \label{Proposition 2}
Assume that
$$
c < \dfrac{s}{N}\, S_V^{N/sp},\quad c \ne 0.
$$
Then any {\em \PS{c}} sequence has a subsequence
converging weakly to a nontrivial solution of \eqref{4}.
\end{proposition}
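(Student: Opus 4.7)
The plan follows the classical strategy for critical problems: bound the \PS{c} sequence $(u_j)$ in $X$, extract a weakly convergent subsequence $u_j\wto u$, pass to the limit in $\Phi'(u_j)=\o(1)$ to identify $u$ as a weak solution of \eqref{4}, and exploit the energy bound to force $u\not\equiv 0$.

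For boundedness, the combination $\Phi(u_j)-\tfrac{1}{q}\,\Phi'(u_j)\,u_j$ cancels the $L$-weighted subcritical term and yields
\[
\bigl(\tfrac{1}{p}-\tfrac{1}{q}\bigr)\norm{u_j}^p + \bigl(\tfrac{1}{q}-\tfrac{1}{p_s^\ast}\bigr)\int_{\R^N}|u_j|^{p_s^\ast}\,dx \le c+\o(1)+\eps_j\norm{u_j}+\bigl(\tfrac{1}{p}-\tfrac{1}{q}\bigr)\lambda\int_{\R^N} K\,|u_j|^p\,dx.
\]
The case $\lambda\le 0$ is immediate; for $\lambda>0$ I would use $(H_2)$ to split $\int K|u_j|^p$ into an outside part bounded by $\delta\,\norm{u_j}^p$ for $\delta$ arbitrarily small, and a local part bounded by $C(R)\bigl(\int|u_j|^{p_s^\ast}\,dx\bigr)^{p/p_s^\ast}$ via H\"older; these two contributions are then absorbed into the two left-hand terms using Young's inequality, yielding $\norm{u_j}^p\le C(1+\eps_j\norm{u_j})$ and hence boundedness. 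Passing to a subsequence: $u_j\wto u$ in $X$, $u_j\to u$ a.e.\ in $\R^N$, and $u_j\to u$ strongly in $L^p_K(\R^N)\cap L^q_L(\R^N)$ by Proposition \ref{Proposition 1}.

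To see that $u$ solves \eqref{4} weakly, I would pass to the limit in $\Phi'(u_j)\,v=\o(1)$ for each $v\in X$. The two weighted subcritical integrals converge via the strong convergences just obtained together with a.e.\ convergence of $|u_j|^{p-2}u_j$, $|u_j|^{q-2}u_j$ and Vitali's theorem. The critical term converges because $|u_j|^{p_s^\ast-2}u_j$ is bounded in $L^{(p_s^\ast)'}(\R^N)$ and converges a.e.\ to $|u|^{p_s^\ast-2}u$, hence weakly, and $v\in L^{p_s^\ast}(\R^N)$. The delicate nonlocal term is treated by noting that
\[
U_j(x,y):=\frac{|u_j(x)-u_j(y)|^{p-2}(u_j(x)-u_j(y))}{|x-y|^{(N+sp)/p'}}
\]
is bounded in $L^{p'}(\R^{2N})$ and converges a.e., hence weakly in $L^{p'}$, to its pointwise limit; testing against $(v(x)-v(y))\,|x-y|^{-(N+sp)/p}\in L^p(\R^{2N})$ then gives the desired convergence. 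Thus $\Phi'(u)=0$ in $X^\ast$.

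Finally, suppose $u\equiv 0$. Proposition \ref{Proposition 1} then forces $\int K|u_j|^p\,dx,\,\int L|u_j|^q\,dx\to 0$, so $\Phi(u_j)\to c$ and $\Phi'(u_j)u_j\to 0$ become $\tfrac{1}{p}\norm{u_j}^p-\tfrac{1}{p_s^\ast}\int|u_j|^{p_s^\ast}dx\to c$ and $\norm{u_j}^p-\int|u_j|^{p_s^\ast}dx\to 0$, giving $\norm{u_j}^p\to\ell$ and $\int|u_j|^{p_s^\ast}dx\to\ell$ with $\ell=\tfrac{N}{s}\,c$. Passing to the limit in \eqref{5} produces $\ell\ge S_V\,\ell^{p/p_s^\ast}$, so either $\ell=0$ (whence $c=0$, contradicting $c\ne 0$) or $\ell\ge S_V^{N/sp}$ (whence $c\ge\tfrac{s}{N}S_V^{N/sp}$, contradicting the energy bound). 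Either way, $u\ne 0$. The principal obstacle I anticipate is the limit passage in the nonlocal quasilinear term, which requires identifying the weak-$L^{p'}(\R^{2N})$ limit of $U_j$ with the right pointwise expression; the boundedness argument is the secondary technical point, handled by combining $(H_2)$ with a local H\"older/Young absorption.
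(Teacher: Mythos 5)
Your proposal is correct and follows essentially the same route as the paper: boundedness via a $\Phi - \tfrac{1}{q}\Phi'\cdot u_j$ (versus the paper's $\tfrac1p$) combination together with the $(H_2)$ splitting $K\le\eps V$ outside a ball and H\"older inside, limit passage in the nonlocal term via $L^{p'}(\R^{2N})$-boundedness plus a.e.\ convergence, and the standard dichotomy $\ell=0$ or $\ell\ge S_V^{N/sp}$ to rule out $u\equiv 0$ under $c\ne 0$ and $c<\tfrac{s}{N}S_V^{N/sp}$. The differences are only cosmetic, so no further comparison is needed.
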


\noindent
Let
\[
S_{s,p} = \inf_{u \in L^{p_s^\ast}(\R^N) \setminus \set{0},\, [u]_{s,p} < \infty}\, \frac{[u]_{s,p}^p}{\pnorm[p_s^\ast]{u}^p}
\]
be the best constant in the fractional Sobolev inequality. Since $V$ is positive, $S_V \ge S_{s,p}$. Equality holds if $V \in L^{N/sp}(\R^N)$. To see this, let $\varphi$ be a minimizer for $S_{s,p}$ and let $y_j \in \R^N$ with $|y_j| \to \infty$. Then $u_j := \varphi(\cdot - y_j) \in X$ by the H\"{o}lder inequality and $\pnorm[p,V]{u_j} \to 0$ as easily seen by approximating $V$ and $\varphi$ by functions with compact supports. Since $[\cdot]_{s,p}$ and $\pnorm[p_s^\ast]{\cdot}$ are translation invariant, then
\[
\frac{\norm{u_j}^p}{\pnorm[p_s^\ast]{u_j}^p} = \frac{[\varphi]_{s,p}^p + \pnorm[p,V]{u_j}^p}{\pnorm[p_s^\ast]{\varphi}^p} \to S_{s,p}.
\]
Moreover, in this case, the infimum in \eqref{5} is not attained. For if $u_0$ is a minimizer for $S_V$, then $S_V = \norm{u_0}^p/\pnorm[p_s^\ast]{u_0}^p > [u_0]_{s,p}^p/\pnorm[p_s^\ast]{u_0}^p \ge S_{s,p}$, a contradiction. It is expected that the minimizers $\varphi$ of $S_{s,p}$ decay at infinity
as $\varphi(x)\thicksim |x|^{-(N-sp)/(p-1)}$ (this occurs for $s=1$, see \cite{talenti}) so that, for
$$
\alpha_0=
\begin{cases}
1 & \text{if $N>sp^2$} \\
\frac{N}{p'(N-sp)}   & \text{if $sp<N\leq sp^2$},
\end{cases}
$$
the conclusion $S_V=S_{s,p}$ is expected to hold by assuming that $V\in L^{\alpha'}(\R^N)$ for some $\alpha>\alpha_0$. \newline
\noindent
Since $X$ is compactly embedded in $L^p_K(\R^N)$ by Proposition \ref{Proposition 1},
\begin{equation} \label{6}
\lambda_1 = \inf_{u \in X \setminus \set{0}}\, \frac{\norm{u}^p}{\pnorm[p,K]{u}^p} > 0
\end{equation}
is the first eigenvalue of the eigenvalue problem
\begin{equation} \label{7}
(- \Delta)_p^s\, u + V(x)\, |u|^{p-2}\, u = \lambda K(x)\, |u|^{p-2}\, u, \quad u \in X.
\end{equation}
First we obtain a positive solution of equation \eqref{4} when $\lambda < \lambda_1$.

\begin{theorem} \label{Theorem 1}
Assume $(H_1)$ and $(H_2)$. If $\lambda < \lambda_1$, then there exists $\mu^\ast(\lambda) > 0$ such that equation \eqref{4} has a positive solution for all $\mu \ge \mu^\ast(\lambda)$.
\end{theorem}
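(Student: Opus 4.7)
The plan is to apply the mountain pass theorem to the truncated functional
\[
\Phi_+(u) = \frac{1}{p}\,\norm{u}^p - \int_{\R^N} \Bigl(\frac{\lambda}{p}\, K(x)\, u_+^p + \frac{\mu}{q}\, L(x)\, u_+^q + \frac{1}{p_s^\ast}\, u_+^{p_s^\ast}\Bigr) dx,
\]
where $u_+ = \max\set{u,0}$. Its critical points solve the truncated equation in which every $|u|^{r-2}u$ on the right-hand side of \eqref{4} is replaced by $u_+^{r-1}$; once such a critical point is shown to be nonnegative, it will in fact solve \eqref{4}. Mountain pass geometry near zero is immediate: \eqref{6} yields $\int K u_+^p\,dx \le \lambda_1^{-1}\norm{u}^p$, and combined with the embeddings $X \hookrightarrow L^q_L(\R^N) \cap L^{p_s^\ast}(\R^N)$ coming from Proposition \ref{Proposition 1} and \eqref{5}, this gives
\[
\Phi_+(u) \ge \frac{1}{p}\Bigl(1 - \frac{\lambda^+}{\lambda_1}\Bigr)\,\norm{u}^p - C_1\mu\,\norm{u}^q - C_2\,\norm{u}^{p_s^\ast}.
\]
Since $\lambda < \lambda_1$ and $q, p_s^\ast > p$, this is bounded below by some $\alpha > 0$ on a small sphere $\norm{u} = \rho$. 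For any nonnegative $\varphi_0 \in X \setminus \set{0}$, $\Phi_+(t\varphi_0) \to -\infty$ as $t \to \infty$, so the second geometric ingredient is in place.

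The key quantitative step is to bound the mountain pass level
\[
c_\mu = \inf_{\gamma \in \Gamma}\, \max_{t \in [0,1]}\, \Phi_+(\gamma(t))
\]
so as to verify the hypothesis of Proposition \ref{Proposition 2}. Pick a nonnegative $\varphi_0 \in X$ with $\int L \varphi_0^q\,dx > 0$, and put $A = \frac{1}{p}\bigl(\norm{\varphi_0}^p - \lambda \int K \varphi_0^p\bigr)$ and $B = \frac{1}{q}\int L \varphi_0^q$. Both are strictly positive because $\lambda < \lambda_1$, and discarding the nonpositive critical term yields
\[
\Phi_+(t\varphi_0) \le A\, t^p - B\mu\, t^q.
\]
An elementary one-variable maximization shows that the maximum over $t \ge 0$ equals $A\,(1 - p/q)\,\bigl(Ap/(Bq\mu)\bigr)^{p/(q-p)}$, which tends to $0$ as $\mu \to \infty$ since $q > p$. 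Hence there exists $\mu^\ast(\lambda) > 0$ such that $c_\mu < (s/N)\,S_V^{N/sp}$ for all $\mu \ge \mu^\ast(\lambda)$, while the mountain pass geometry guarantees $c_\mu \ge \alpha > 0$. A standard deformation argument produces a Palais--Smale sequence at level $c_\mu$ for $\Phi_+$, and Proposition \ref{Proposition 2} (whose proof transfers verbatim to the truncated functional, since the modifications do not affect growth) delivers a nontrivial weak solution $u \in X$ of the truncated equation.

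To return to \eqref{4} I test the truncated equation with $v = u_- \in X$. Since $u_+ u_- \equiv 0$, the right-hand side vanishes, while the potential term contributes $\int V|u|^{p-2}u\, u_-\,dx = -\int V u_-^p\,dx$. For the nonlocal term I use the pointwise inequality
\[
|a-b|^{p-2}(a-b)\,(a_- - b_-) \le -\,|a_- - b_-|^p, \qquad a,b \in \R,
\]
proved by a routine case check on the signs of $a$ and $b$ (the only nontrivial case, $ab < 0$, reduces to $(a-b)^{p-1} \ge (-b)^{p-1}$ via $a - b > -b > 0$). Integrating against the kernel $|x-y|^{-(N+sp)}$ gives $[u_-]_{s,p}^p + \int V u_-^p\,dx \le 0$, so $u_- \equiv 0$. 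Hence $u \ge 0$ solves \eqref{4}, and strict positivity follows from a strong maximum principle for $(-\Delta)_p^s + V$.

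The main obstacle is the mountain pass level estimate: we need $c_\mu$ both bounded away from $0$ from below (for which $\lambda < \lambda_1$ is indispensable, since only then is the coefficient $A$ of $t^p$ strictly positive) and strictly below the compactness threshold $(s/N)\,S_V^{N/sp}$; the second inequality crucially exploits $q > p$ so that the superlinear subcritical term $\mu L |u|^q$ drives the maximum along $t\varphi_0$ to $0$ uniformly as $\mu \to \infty$.
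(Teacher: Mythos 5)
Your proposal is correct and follows essentially the same route as the paper: the truncated functional $\Phi^+$, mountain pass geometry from $\lambda<\lambda_1$ together with the embeddings \eqref{5}, \eqref{6}, \eqref{18}, a level estimate obtained by discarding the critical term along the ray $t\varphi_0$ so that the maximum tends to $0$ as $\mu\to\infty$, the adapted Proposition \ref{Proposition 2}, and nonnegativity via testing with the negative part using the same elementary pointwise inequality. The only minor difference is that for strict positivity you invoke a strong maximum principle for $(-\Delta)_p^s+V$, whereas the paper argues directly from the pointwise definition \eqref{1} at a hypothetical zero of $u$ -- the same idea in substance.
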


\noindent
Let $u^\pm(x) = \max \set{\pm u(x),0}$ be the positive and negative parts of $u$, respectively, and set
\[
\Phi^+(u) = \frac{1}{p}\, \norm{u}^p - \int_{\R^N} \left(\frac{\lambda}{p}\, K(x)\, (u^+)^p + \frac{\mu}{q}\, L(x)\, (u^+)^q + \frac{1}{p_s^\ast}\, (u^+)^{p_s^\ast}\right) dx, \quad u \in X.
\]
If $u$ is a critical point of $\Phi^+$, then recalling the elementary inequality
$$
|a^--b^-|^p\leq |a-b|^{p-2}(a-b)(b^--a^-),\quad\, \forall a,b\in\R,
$$
we obtain
\begin{align*}
0 = {\Phi^+}'(u)\, (- u^-) &=
\int_{\R^{2N}} \frac{|u(x) - u(y)|^{p-2}(u(x)-u(y))(u^-(y)-u^-(x))}{|x - y|^{N+sp}}\, dx dy\\
&+\int_{\R^N}V(x)|u^-|^pdx\geq \|u^-\|^p
\end{align*}
and hence $u^- = 0$, so $u = u^+ \ge 0$ is a critical point of $\Phi$ and therefore a nonnegative solution of equation \eqref{4}. Moreover, if it was $u(x_0) = 0$ for some $x_0 \in \R^N$, then \eqref{4} and \eqref{1} give
\[
\lim_{\eps \searrow 0}\, \int_{\R^N \setminus B_\eps(x_0)} \frac{u(y)^{p-1}}{|x_0 - y|^{N+sp}}\, dy = 0,
\]
so $u = 0$. Thus, nontrivial critical points of $\Phi^+$ are positive solutions of \eqref{4}. The proof of Theorem \ref{Theorem 1} will be based on constructing a minimax level of mountain pass type for $\Phi^+$ below the threshold level given in Proposition \ref{Proposition 2}.

\noindent
Next we obtain a (possibly nodal) nontrivial solution of equation \eqref{4} when $\lambda \ge \lambda_1$.

\begin{theorem} \label{Theorem 2}
Assume $(H_1)$ and $(H_2)$. If $\lambda \ge \lambda_1$, then there exists $\mu_\ast(\lambda) > 0$ such that equation \eqref{4} has a nontrivial solution for all $\mu \ge \mu_\ast(\lambda)$.
\end{theorem}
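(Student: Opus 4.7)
The plan is to build a generalized linking structure based on the $\Z_2$-cohomological index $i$ of Fadell--Rabinowitz, bypassing the missing direct sum decomposition exactly as in \cite{PeSqYa2} for the bounded-domain case. First I would consider the unit $K$-sphere $M = \set{u \in X : \pnorm[p,K]{u} = 1}$ and the restricted functional $\Psi(u) = \norm{u}^p$. Proposition \ref{Proposition 1} implies that $\Psi$ satisfies the Palais--Smale condition on $M$, so the min-max values
\[
\lambda_k = \inf_{\substack{A \incl M,\, A = -A \\ i(A) \ge k}}\, \sup_{u \in A}\, \Psi(u), \qquad k \ge 1,
\]
form a nondecreasing divergent sequence of eigenvalues of \eqref{7}, with $\lambda_1$ coinciding with the one in \eqref{6}. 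Given $\lambda \ge \lambda_1$ I would fix the unique $k$ with $\lambda_k \le \lambda < \lambda_{k+1}$.

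Next, a standard deformation argument produces a symmetric compact set $A_0 \incl M \cap \set{\Psi \le \lambda_k}$ with $i(A_0) = k$. Let $\widetilde A = \set{tu : u \in A_0,\, 0 \le t \le R}$ be the truncated cone over $A_0$ and let $\widetilde B$ be the cone over $M \cap \set{\Psi \ge \lambda_{k+1}}$. The abstract linking theorem based on the cohomological index then yields a \PS{c}-sequence at the level
\[
c = \inf_{\gamma \in \Gamma}\, \sup_{u \in \widetilde A}\, \Phi(\gamma(u)),
\]
where $\Gamma$ is the class of continuous deformations of $\widetilde A$ that fix its relative boundary, provided $\sup \Phi|_{\bdry \widetilde A} \le 0 < \inf \Phi|_{\widetilde B \cap \bdry B_\rho}$ for some $\rho \in (0,R)$. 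The strict lower bound is automatic since $\lambda < \lambda_{k+1}$ gives $\norm{u}^p \ge \lambda_{k+1} \pnorm[p,K]{u}^p$ on $\widetilde B$, while for small $\rho$ the subcritical and critical terms are of higher order than $\rho^p$.

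The decisive step is to secure both $\sup \Phi|_{\bdry \widetilde A} \le 0$ and the strict energy bound $c < \frac{s}{N}\, S_V^{N/sp}$. The first is obtained by taking $R$ large, since on the outer face the critical term $|u|^{p_s^\ast}$ dominates as $\norm{u} \to \infty$. For the second, I would evaluate $\Phi$ on test configurations of the form $u = v + \tau\, w_\eps$, where $v$ ranges over $A_0$ and $w_\eps$ is a truncated rescaled extremal of $S_{s,p}$ concentrated at a point $x_0$ whose effective support is disjoint from that of the elements of $A_0$. Using the known asymptotics $[w_\eps]_{s,p}^p = S_{s,p}^{N/sp} + O\big(\eps^{(N-sp)/(p-1)}\big)$, the $L^{p_s^\ast}$-normalization, and a lower bound of the form $\int L(x)\, w_\eps^q\, dx \gtrsim \eps^{N - q(N-sp)/p}$ (permitted by $(H_2)$ with $x_0$ placed appropriately), the coupling with $v$ drives $\Phi$ strictly below $\frac{s}{N}\, S_V^{N/sp}$ for all $\mu \ge \mu_\ast(\lambda)$; the exponent inequality $q > p$ is precisely what makes the $\mu$-term effective against the critical error.

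Once $0 < c < \frac{s}{N}\, S_V^{N/sp}$ is in place, Proposition \ref{Proposition 2} delivers the nontrivial weak solution as the weak limit of the \PS{c}-sequence. The main obstacle is the upper energy estimate: since $p \ne 2$ in general, $\Phi$ is not quadratic, so $\Phi(v + \tau\, w_\eps)$ must be treated by a genuinely nonlinear expansion in which cross terms between $v$ and $w_\eps$ have to be controlled uniformly in $v \in A_0$. The disjoint-support geometry and fractional Brezis--Lieb-type decompositions for the Gagliardo seminorm are the natural tools here, and the threshold $\mu_\ast(\lambda)$ ultimately reflects the interaction between $\lambda$ (which governs the size of $A_0$) and the decay rate of $L(x)/V(x)^{[Np - (N-sp)q]/sp^2}$ at infinity.
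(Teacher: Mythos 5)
Your overall architecture (cohomological-index eigenvalues $\lambda_k$, a compact symmetric $A_0$ in the sublevel set, linking against the cone over $\set{\Psi \ge \lambda_{k+1}}$, then Proposition \ref{Proposition 2} below the threshold $\tfrac{s}{N}S_V^{N/sp}$) matches the paper, but the decisive step is handled in a way that does not go through. You propose to obtain $c < \tfrac{s}{N}S_V^{N/sp}$ by testing with $v + \tau\, w_\eps$, where $w_\eps$ are truncated rescaled extremals of $S_{s,p}$, invoking ``known asymptotics'' such as $[w_\eps]_{s,p}^p = S_{s,p}^{N/sp} + O(\eps^{(N-sp)/(p-1)})$ and a lower bound on $\int L\, w_\eps^q$. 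For the fractional $p$-Laplacian with $p \ne 2$ these asymptotics are not available: the paper itself stresses that the decay $\varphi(x) \sim |x|^{-(N-sp)/(p-1)}$ of minimizers of $S_{s,p}$ is only \emph{expected}, so every estimate in your expansion that uses the tail of $w_\eps$ (the seminorm error, the $V$- and $L$-weighted integrals, the cross terms with $v$) is unsupported. Moreover the threshold involves $S_V \ge S_{s,p}$, and when $S_V > S_{s,p}$ the bubbles' interaction with $V$ must be quantified, which again needs decay information. None of this machinery is needed: since the theorem only claims existence for $\mu \ge \mu_\ast(\lambda)$, on the compact set $X_0 = \set{\pi((1-t)u + tv) : u \in A_0,\, 0 \le t \le 1}$ one simply discards the (helpful, negative) critical term and gets $\Phi(tu) \le \frac{\alpha t^p}{p} - \frac{\mu\beta t^q}{q} \le \bigl(\frac1p - \frac1q\bigr)\frac{(\alpha^+)^{q/(q-p)}}{(\mu\beta)^{p/(q-p)}}$ with $\alpha < \infty$, $\beta > 0$ by compactness of $X_0$; this is below $\tfrac{s}{N}S_V^{N/sp}$ as soon as $\mu$ is large, which is how the paper defines $\mu_\ast(\lambda)$.

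A second gap is the provenance of the compact symmetric set $A_0 \incl \Psi^{\lambda_k}$ with $i(A_0) = k$, which you attribute to ``a standard deformation argument.'' A deformation can lower the level of a set of index $\ge k$ (and even that is delicate, since there may be eigenvalues in $(\lambda_k,\lambda_{k+1})$ other than the minimax values), but it does not produce compactness, and compactness of $A_0$ is exactly what makes $\alpha$ finite and $\beta$ positive in the energy estimate above. The paper needs a separate construction (Proposition \ref{Proposition 4}): project $\Psi^{\lambda_k}$ to the $K$-sphere, apply the solution operator $J$ of the auxiliary problem \eqref{29} (Lemma \ref{Lemma 6}), and use the compactness of the embedding $X \hookrightarrow L^p_K(\R^N)$ to conclude that $\widetilde J$ maps this set onto a compact set of the same index, which is then radially projected back to $S$. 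You should replace both the bubble estimate and the deformation claim accordingly; with those two repairs your linking scheme coincides with the paper's proof.
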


\noindent
This extension of Theorem \ref{Theorem 1} is nontrivial. Indeed, the functional $\Phi$ does not have the mountain pass geometry when $\lambda \ge \lambda_1$ since the origin is no longer a local minimizer, and a linking type argument is needed. However, the classical linking theorem cannot be used since the nonlinear eigenvalue problem \eqref{7} does not have linear eigenspaces.

\section{Tools from critical point theory}
We will use a general construction based on sublevel sets as in Perera and Szulkin \cite{MR2153141} (see also Perera et al.\! \cite[Proposition 3.23]{MR2640827}). Moreover, the standard sequence of eigenvalues of \eqref{7} based on the genus does not give enough information about the structure of the sublevel sets to carry out this linking construction. Therefore we will use a different sequence of eigenvalues as in Perera \cite{MR1998432} that is based on a cohomological index.

\noindent
The $\Z_2$-cohomological index of Fadell and Rabinowitz \cite{MR57:17677} is defined as follows. Let $W$ be a Banach space and let $\A$ denote the class of symmetric subsets of $W \setminus \set{0}$. For $A \in \A$, let $\overline{A} = A/\Z_2$ be the quotient space of $A$ with each $u$ and $-u$ identified, let $f : \overline{A} \to \RP^\infty$ be the classifying map of $\overline{A}$, and let $f^\ast : H^\ast(\RP^\infty) \to H^\ast(\overline{A})$ be the induced homomorphism of the Alexander-Spanier cohomology rings. The cohomological index of $A$ is defined by
\[
i(A) = \begin{cases}
\sup \set{m \ge 1 : f^\ast(\omega^{m-1}) \ne 0}, & A \ne \emptyset\\[5pt]
0, & A = \emptyset,
\end{cases}
\]
where $\omega \in H^1(\RP^\infty)$ is the generator of the polynomial ring $H^\ast(\RP^\infty) = \Z_2[\omega]$. For example, the classifying map of the unit sphere $S^{m-1}$ in $\R^m,\, m \ge 1$ is the inclusion $\RP^{m-1} \incl \RP^\infty$, which induces isomorphisms on $H^q$ for $q \le m - 1$, so $i(S^{m-1}) = m$. The following proposition summarizes the basic properties of this index.

\begin{proposition}[Fadell-Rabinowitz \cite{MR57:17677}] \label{Proposition 3}
The index $i : \A \to \N \cup \set{0,\infty}$ has the following properties:
\begin{properties}{i}
\item Definiteness: $i(A) = 0$ if and only if $A = \emptyset$;
\item \label{i2} Monotonicity: If there is an odd continuous map from $A$ to $B$ (in particular, if $A \subset B$), then $i(A) \le i(B)$. Thus, equality holds when the map is an odd homeomorphism;
\item Dimension: $i(A) \le \dim W$;
\item Continuity: If $A$ is closed, then there is a closed neighborhood $N \in \A$ of $A$ such that $i(N) = i(A)$. When $A$ is compact, $N$ may be chosen to be a $\delta$-neighborhood $N_\delta(A) = \set{u \in W : \dist{u}{A} \le \delta}$;
\item Subadditivity: If $A$ and $B$ are closed, then $i(A \cup B) \le i(A) + i(B)$;
\item Stability: If $SA$ is the suspension of $A \ne \emptyset$, obtained as the quotient space of $A \times [-1,1]$ with $A \times \set{1}$ and $A \times \set{-1}$ collapsed to different points, then $i(SA) = i(A) + 1$;
\item Piercing property: If $A$, $A_0$ and $A_1$ are closed, and $\varphi : A \times [0,1] \to A_0 \cup A_1$ is a continuous map such that $\varphi(-u,t) = - \varphi(u,t)$ for all $(u,t) \in A \times [0,1]$, $\varphi(A \times [0,1])$ is closed, $\varphi(A \times \set{0}) \subset A_0$ and $\varphi(A \times \set{1}) \subset A_1$, then $i(\varphi(A \times [0,1]) \cap A_0 \cap A_1) \ge i(A)$;
\item Neighborhood of zero: If $U$ is a bounded closed symmetric neighborhood of $0$, then $i(\bdry{U}) = \dim W$.
\end{properties}
\end{proposition}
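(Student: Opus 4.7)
The plan is to verify each of the eight properties directly from the definition, reducing everything to functorial facts about the principal $\Z_2$-bundle $A \to \overline{A}$ and its classifying map $f : \overline{A} \to \RP^\infty$, together with standard facts about Alexander-Spanier cohomology of the universal example $\RP^\infty = B\Z_2$ whose ring is $\Z_2[\omega]$.

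Properties (i$_1$) and (i$_2$) are essentially formal. Definiteness is immediate from the convention and the fact that $f^\ast(\omega^0) = 1 \ne 0$ whenever $\overline A \ne \emptyset$. For monotonicity, an odd continuous map $g : A \to B$ passes to the quotient as $\overline g : \overline A \to \overline B$, and the universal property of $B\Z_2$ gives $f_A \simeq f_B \circ \overline g$; hence $f_A^\ast = \overline g^\ast \circ f_B^\ast$, so $f_B^\ast(\omega^{m-1}) = 0$ forces $f_A^\ast(\omega^{m-1}) = 0$. The dimension property (i$_3$) follows by embedding $A$ equivariantly into $W \setminus \set 0$ and noting that $W \setminus \set 0$ is $\Z_2$-equivariantly homotopy equivalent to the unit sphere $S^{\dim W - 1}$, then applying (i$_2$) together with the computation $i(S^{m-1}) = m$ already given in the text. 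Continuity (i$_4$) uses that $\RP^\infty$ is an ANR: a classifying map of the closed symmetric set $A$ extends to an equivariant map on some symmetric neighborhood $N$, which still classifies $N$; when $A$ is compact one may further take $N$ to be a $\delta$-tube, using a Tietze-style construction. The neighborhood-of-zero statement (i$_8$) is then an easy corollary of (i$_2$) and (i$_3$): radial projection gives an odd homeomorphism between $\bdry U$ and the unit sphere.

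Properties (i$_5$), (i$_6$), (i$_7$) are the ones carrying real geometric content. For subadditivity, the key input is the cup-length estimate: if $\overline A = U_1 \cup U_2$ with $f_A^\ast(\omega^{m-1})$ supported in $\overline{U_1}$ and $f_A^\ast(\omega^{n-1})$ supported in $\overline{U_2}$ (in the sense of the pair exact sequence), then their cup product $f_A^\ast(\omega^{m+n-1})$ vanishes, and this can be applied to $U_j = \overline{A \cap$ symmetric neighborhood of $A_j}$ after a little work with (i$_4$). Stability would be obtained by recognizing $\overline{SA}$ as the double mapping cone of $\overline A \to \text{pt}$ and invoking the suspension isomorphism together with the fact that the $\omega$-action under the classifying map is precisely what the Gysin sequence of the sphere bundle $S(\R \oplus \underline{\R}_-) \to \overline{SA}$ predicts. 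The piercing property is the most combinatorial: one uses $\varphi$ to construct an odd map from $A$ into the intersection $\varphi(A \times [0,1]) \cap A_0 \cap A_1$ (or into something that deformation retracts onto it) by a connectedness argument on the interval $[0,1]$, and then applies (i$_2$).

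The main obstacle, and the step I would spend the most care on, is (i$_6$) (stability), because the identification of the index of the suspension with $i(A) + 1$ requires showing both that $\omega^{i(A)}$ pulls back nontrivially (existence via the suspension isomorphism) and that $\omega^{i(A)+1}$ must vanish (a sharp upper bound, which really uses that $SA$ is built from $A$ by attaching only two equivariant cones of cohomological dimension one apiece). Subadditivity (i$_5$) is a close second obstacle since the cup-length argument needs to be set up in Alexander-Spanier cohomology, where one has to be careful that the natural relative/Mayer-Vietoris machinery applies to the possibly non-CW sets $A$ and $B$; the closedness hypothesis together with the ANR property of $\RP^\infty$ (used through (i$_4$)) is what makes this work.
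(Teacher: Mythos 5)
You should first note that the paper does not prove this statement at all: Proposition \ref{Proposition 3} is quoted, with attribution, from Fadell--Rabinowitz \cite{MR57:17677} (see also \cite{MR2640827}), so there is no in-paper argument to compare against; the comparison has to be with the original proofs. Measured against those, your outline of $(i_1)$--$(i_4)$ and $(i_8)$ is essentially the standard one (classifying-map functoriality, the equivariant retraction of $W \setminus \set{0}$ onto the unit sphere, the ANR extension argument for continuity, radial projection for the neighborhood of zero), and your cup-length sketch for subadditivity is the right mechanism.

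There are, however, two genuine gaps. First, your argument for the piercing property $(i_7)$ does not work as stated: the connectedness of $[0,1]$ only gives, for each fixed $u \in A$, \emph{some} $t$ with $\varphi(u,t) \in A_0 \cap A_1$; it does not produce a continuous, let alone odd, map from $A$ into $\varphi(A \times [0,1]) \cap A_0 \cap A_1$, because there is in general no continuous selection $u \mapsto t(u)$ (the first hitting time of $A_0 \cap A_1$ need not depend continuously on $u$), and the intersection may well fail to admit any odd map from $A$ at all in the naive sense. The actual proof is cohomological, not a selection argument: one writes $\varphi(A \times [0,1]) = C_0 \cup C_1$ with $C_j = \varphi(A \times [0,1]) \cap A_j$ and runs a Mayer--Vietoris/cup-product argument on the $\Z_2$-quotients, much closer in spirit to your subadditivity sketch than to your proposed use of $(i_2)$. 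Second, the two properties you yourself flag as the main obstacles, stability $(i_6)$ and subadditivity $(i_5)$, are only announced, not proved; since these (together with $(i_7)$) carry the real content of the proposition, the proposal as written is a plan rather than a proof, and completing it would essentially amount to reproducing the arguments of \cite{MR57:17677}, which is precisely why the paper cites rather than proves the result.
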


\noindent
Eigenvalues of problem \eqref{7} coincide with critical values of the functional
\[
\Psi(u) = \left(\int_{\R^N} K(x)\, |u|^p\, dx\right)^{-1}
\]
on the unit sphere $S = \set{u \in X : \norm{u} = 1}$, and we can define an increasing and unbounded sequence of eigenvalues via a suitable minimax scheme. The standard scheme based on the genus does not give the index information necessary to prove Theorem \ref{Theorem 2}, so we will use a different scheme based on a cohomological index as in Perera \cite{MR1998432}. First we note that the general theory developed in Perera et al.\! \cite{MR2640827} applies to this problem. Indeed, the odd $(p - 1)$-homogeneous operator $A_p \in C(X,X^\ast)$, where $X^\ast$ is the dual of $X$, defined by
\begin{multline} \label{19}
A_p(u)\, v = \int_{\R^{2N}} \frac{|u(x) - u(y)|^{p-2}\, (u(x) - u(y))\, (v(x) - v(y))}{|x - y|^{N+sp}}\, dx dy\\[5pt]
+ \int_{\R^N} V(x)\, |u|^{p-2}\, uv\, dx, \quad u, v \in X
\end{multline}
that is associated with the left-hand side of equation \eqref{7} is the Fr\'{e}chet derivative of the $C^1$-functional $X \to \R,\, u \mapsto \norm{u}^p\!/p$ and satisfies for all $u, v \in X$,
\begin{equation} \label{8}
A_p(u)\, u = \norm{u}^p
\end{equation}
and
\begin{multline} \label{27}
|A_p(u)\, v| \le \int_{\R^{2N}} \frac{|u(x) - u(y)|^{p-1}\, |v(x) - v(y)|}{|x - y|^{N+sp}}\, dx dy + \int_{\R^N} V(x)\, |u|^{p-1}\, |v|\, dx\\[5pt]
\le [u]_{s,p}^{p-1}\, [v]_{s,p} + \pnorm[p,V]{u}^{p-1} \pnorm[p,V]{v} \le \norm{u}^{p-1} \norm{v}
\end{multline}
by the H\"{o}lder inequalities for integrals and sums. Moreover, since $X$ is uniformly convex, it follows from \eqref{8} and \eqref{27} that $A_p$ is of type (S), i.e., every sequence $\seq{u_j} \subset X$ such that
\[
u_j \wto u, \quad A_p(u_j)\, (u_j - u) \to 0
\]
has a subsequence that converges strongly to $u$ (cf.\ \cite[Proposition 1.3]{MR2640827}). Hence the operator $A_p$ satisfies the structural assumptions of \cite[Chapter 1]{MR2640827}. On the other hand, the odd $(p - 1)$-homogeneous operator $B_p \in C(X,X^\ast)$ defined by
\[
B_p(u)\, v = \int_{\R^N} K(x)\, |u|^{p-2}\, uv\, dx, \quad u, v \in X
\]
that appears in the right-hand side of \eqref{7} is the Fr\'{e}chet derivative of the $C^1$-functional $X \to \R,\, u \mapsto \pnorm[p,K]{u}^p\!/p$ and satisfies $B_p(u)\, u = \pnorm[p,K]{u}^p$ for all $u \in X$. Moreover, since $X$ is compactly embedded in $L^p_K(\R^N)$ by Proposition \ref{Proposition 1}, $B_p$ is compact. Hence $B_p$ satisfies the assumptions of \cite[Chapter 4]{MR2640827}. Let $\F$ denote the class of symmetric subsets of $S$ and set
\[
\lambda_k := \inf_{M \in \F,\; i(M) \ge k}\, \sup_{u \in M}\, \Psi(u), \quad k \in \N.
\]
By \cite[Theorem 4.6]{MR2640827}, $\lambda_k \nearrow + \infty$ is a sequence of eigenvalues of problem \eqref{7} and
\begin{equation} \label{9}
\lambda_k < \lambda_{k+1} \implies i(\Psi^{\lambda_k}) = i(S \setminus \Psi_{\lambda_{k+1}}) = k,
\end{equation}
where $\Psi^{\lambda_k} = \set{u \in S : \Psi(u) \le \lambda_k}$ and $\Psi_{\lambda_{k+1}} = \set{u \in S : \Psi(u) \ge \lambda_{k+1}}$.

\noindent
The proof of Theorem \ref{Theorem 2} will make essential use of \eqref{9} and will be based on the following abstract critical point theorem.

\begin{theorem} \label{Theorem 3}
Let $X$ be a Banach space and let $S = \set{u \in X : \norm{u} = 1}$ be the unit sphere in $X$. Let $\Phi$ be a $C^1$-functional on $X$ and let $A_0,\, B_0$ be disjoint nonempty closed symmetric subsets of $S$ such that
\begin{equation} \label{10}
i(A_0) = i(S \setminus B_0) < \infty.
\end{equation}
Assume that there exist $R > r > 0$ and $v \in S \setminus A_0$ such that
\[
\sup \Phi(A) \le \inf \Phi(B), \qquad \sup \Phi(D) < \infty,
\]
where
\begin{gather*}
A = \set{tu : u \in A_0,\, 0 \le t \le R} \cup \set{R\, \pi((1 - t)\, u + tv) : u \in A_0,\, 0 \le t \le 1},\\[5pt]
B = \set{ru : u \in B_0},\\[5pt]
D = \set{tu : u \in A,\, \norm{u} = R,\, 0 \le t \le 1},
\end{gather*}
and $\pi : X \setminus \set{0} \to S,\, u \mapsto u/\!\norm{u}$ is the radial projection onto $S$. Let
$$
\Gamma = \{\gamma \in C(D,X) : \gamma(D) \text{ is closed and} \restr{\gamma}{A} = \id[\!A]\}
$$
and set
\[
c := \inf_{\gamma \in \Gamma}\, \sup_{u \in \gamma(D)}\, \Phi(u).
\]
Then $\inf \Phi(B) \le c \le \sup \Phi(D)$ and $\Phi$ has a {\em \PS{c}} sequence.
\end{theorem}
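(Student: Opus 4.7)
The plan is to establish the three assertions separately: first the upper bound $c \leq \sup \Phi(D)$, then the lower bound $\inf \Phi(B) \leq c$, and finally the existence of a \PS{c} sequence.

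For the upper bound, the identity $\gamma_0 := \id_D$ lies in $\Gamma$, since it is continuous, $\gamma_0(D) = D$ is closed, and $\restr{\gamma_0}{A} = \id_A$. Hence $c \leq \sup_{u \in \gamma_0(D)} \Phi(u) = \sup \Phi(D)$.

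For the lower bound, I would prove the topological \emph{linking} property that every $\gamma \in \Gamma$ satisfies $\gamma(D) \cap B \neq \emptyset$; this yields $\sup_{\gamma(D)} \Phi \geq \inf \Phi(B)$ for every $\gamma \in \Gamma$, and hence $c \geq \inf \Phi(B)$. To prove the intersection, set $k := i(A_0) = i(S \setminus B_0)$ and argue by contradiction: assume some $\gamma \in \Gamma$ satisfies $\gamma(D) \cap B = \emptyset$. Since $0 \in A$ with $\gamma(0) = 0$ while $\restr{\gamma}{A_1} = \id$ on $A_1 := \bgset{u \in A : \snorm{u} = R}$, the continuous function $u \mapsto \snorm{\gamma(u)}$ on $D$ attains both $0$ and $R > r$, so the closed level set $E := \bgset{u \in D : \snorm{\gamma(u)} = r}$ is nonempty, and by hypothesis $\pi(\gamma(E)) \incl S \setminus B_0$. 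Along the symmetric radial spine $D_0 := \set{tu : u \in A_0,\, 0 \leq t \leq R} \incl A$ the map $\gamma$ is the identity, so the contribution of $D_0$ to $E$ projects radially onto $A_0$, already of cohomological index $k$. The strategy is then to combine this with the cap parametrization $(u,s) \mapsto R\, \pi((1-s)u + sv)$ of $A_1$ and the values of $\gamma$ on the cone over $A_1$ in order to assemble, via the piercing property of Proposition \ref{Proposition 3}, a symmetric closed subset of $S \setminus B_0$ of cohomological index strictly greater than $k$, contradicting $i(S \setminus B_0) = k$ by monotonicity.

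The main obstacle lies in the asymmetry introduced by the distinguished point $v \in S \setminus A_0$: the cap $A_1$ itself is not symmetric, so the piercing property cannot be invoked directly on $\gamma$ restricted to the cap. One must first symmetrize the construction, for example by passing to $A_1 \cup (-A_1)$ together with the odd extension of $\gamma$ through the apex, and then verify the closedness and oddness hypotheses of Proposition \ref{Proposition 3} in this enlarged setting. Getting the index bookkeeping right in this symmetrized configuration is the delicate step of the proof.

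For the \PS{c} sequence, I would run a standard deformation argument. Assuming there is no \PS{c} sequence, the quantitative deformation lemma produces, for small enough $\varepsilon > 0$, a homeomorphism $\eta : X \to X$ satisfying $\eta(\set{\Phi \le c + \varepsilon}) \incl \set{\Phi \le c - \varepsilon}$ and $\eta = \id$ on $\set{\Phi \le c - 2\varepsilon}$. Since $\sup \Phi(A) \le \inf \Phi(B) \le c$, one can choose $\varepsilon$ small so that $A \incl \set{\Phi < c - 2\varepsilon}$ (the boundary case $\sup \Phi(A) = c$ requires a minor modification of the flow to fix $A$ pointwise, a standard device), giving $\restr{\eta}{A} = \id_A$. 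Picking $\gamma \in \Gamma$ with $\sup_{\gamma(D)} \Phi < c + \varepsilon$ and setting $\gamma' := \eta \comp \gamma$, one has $\gamma' \in \Gamma$ (continuity is clear, $\gamma'(D) = \eta(\gamma(D))$ is closed since $\eta$ is a homeomorphism and $\gamma(D)$ is closed, and $\restr{\gamma'}{A} = \id_A$) while $\sup_{\gamma'(D)} \Phi \le c - \varepsilon$, contradicting the definition of $c$ as an infimum.
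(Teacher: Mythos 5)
Your outline is correct and routine in its easy parts (the identity map gives $c \le \sup \Phi(D)$; the deformation scheme for the \PS{c} sequence is the standard one), but the heart of Theorem \ref{Theorem 3} is precisely the homotopical linking property that every $\gamma \in \Gamma$ satisfies $\gamma(D) \cap B \neq \emptyset$, and this is the step you do not actually prove: you describe a strategy and then explicitly defer ``the delicate step'' of symmetrizing and ``getting the index bookkeeping right''. That deferred step is the entire nontrivial content of the theorem --- it is what replaces the degree-theoretic intersection argument in Rabinowitz's linking theorem \cite{MR0488128} when no direct sum decomposition is available. Moreover, the specific symmetrization you propose (pass to $A_1 \cup (-A_1)$ and extend $\gamma$ oddly ``through the apex'') does not obviously work: no linear independence of $v$ from $A_0$ is assumed, so $D$ and $-D$ may intersect outside the radial cone over $A_0$ (the positive and negative caps can overlap), and on that overlap nothing guarantees $\gamma(u) = -\gamma(-u)$; hence the odd extension of $\gamma$ need not be well defined. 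Note also that this paper does not prove Theorem \ref{Theorem 3} at all --- it cites Candito et al.\ \cite{CaMaPe} (see also \cite{YaPe2}); the proof there does not extend $\gamma$ oddly but applies the piercing property of Proposition \ref{Proposition 3} to a map constructed on $A_0 \times [0,1]$ from $\gamma$ and the cap parametrization (this is where the closedness requirement on $\gamma(D)$ in the definition of $\Gamma$ is used), combined with monotonicity and stability, to contradict $i(S \setminus B_0) = i(A_0)$. As written, your argument establishes $c \le \sup \Phi(D)$ plus a plan, not the theorem.

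A secondary issue concerns your treatment of the limit case $\sup \Phi(A) = c$ in the deformation argument. It cannot be repaired simply by ``a minor modification of the flow to fix $A$ pointwise'': since every $\gamma' \in \Gamma$ restricts to the identity on $A$, one always has $\sup \Phi(\gamma'(D)) \ge \sup \Phi(A) = c$, so no admissible deformation can push $\gamma(D)$ below the level $c - \eps$, and the contradiction scheme itself collapses rather than needing only a cosmetic fix. Handling $c = \sup \Phi(A)$ requires a genuinely different device (in the application in this paper the case is avoided since $\sup \Phi(A) \le 0 < \inf \Phi(B)$). Apart from this, your deformation step is fine --- the absence of a \PS{c} sequence does give $\eps, \delta > 0$ with $\norm{\Phi'(u)} \ge \delta$ on $\set{u : |\Phi(u) - c| \le 2\eps}$, and $\eta \comp \gamma$ has closed image because $\eta$ is a homeomorphism --- but it is contingent on the lower bound $\inf \Phi(B) \le c$, i.e.\ on the linking step that is missing.
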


\noindent
Theorem \ref{Theorem 3}, which does not require a direct sum decomposition, generalizes the linking theorem of Rabinowitz \cite{MR0488128} and is proved in Candito et al.\! \cite{CaMaPe} (see also Yang and Perera \cite{YaPe2}). The linking construction in its proof was also used in Perera and Szulkin \cite{MR2153141} to obtain nontrivial solutions of $p$-Laplacian problems with nonlinearities that interact with the spectrum. A similar construction based on the notion of cohomological linking was given in Degiovanni and Lancelotti \cite{MR2371112}. See also Perera et al.\! \cite[Proposition 3.23]{MR2640827}.

\section{Preliminaries}

In this preliminary section we prove
Propositions \ref{Proposition 1} and \ref{Proposition 2}.

\begin{proof}[Proof of Proposition \ref{Proposition 1}]
Let $\seq{u_j}$ be a bounded sequence in $X$. Then, a renamed subsequence converges weakly and a.e.\ to some $u \in X$. By virtue of assumption $(H_2)$, given $\eps > 0$, there exists $r_\eps > 0$ such that
$$
K(x) \le \eps\, V(x),\quad\,\,
L(x) \le \eps\, V(x)^{[Np-(N-sp)q]/sp^2},
\qquad\text{for all $x \in B_{r_\eps}(0)^c$}.
$$
The first inequality gives
\begin{multline} \label{11}
\int_{B_{r_\eps}(0)^c} K(x)\, |u_j|^p\, dx \le \eps \int_{\R^N} V(x)\, |u_j|^p\, dx = \eps\, \O(1),\\[5pt]
\int_{B_{r_\eps}(0)^c} K(x)\, |u|^p\, dx \le \eps \int_{\R^N} V(x)\, |u|^p\, dx.
\end{multline}
Combining the second inequality with the Young's inequality gives
$$
L(x)\, |t|^q \le \eps \big(V(x)\, |t|^p + |t|^{p_s^\ast}\big),\qquad \text{for all $x \in B_{r_\eps}(0)^c$ and $t \in \R$},
$$
so that
\begin{multline} \label{12}
\int_{B_{r_\eps}(0)^c} L(x)\, |u_j|^q\, dx \le \eps \int_{\R^N} \left(V(x)\, |u_j|^p + |u_j|^{p_s^\ast}\right) dx = \eps\, \O(1),\\[5pt]
\int_{B_{r_\eps}(0)^c} L(x)\, |u|^q\, dx \le \eps \int_{\R^N} \left(V(x)\, |u|^p + |u|^{p_s^\ast}\right) dx.
\end{multline}
Since $X$ is compactly embedded in $L^p(B_{r_\eps}(0)) \cap L^q(B_{r_\eps}(0))$ and $K$ and $L$ are bounded, $u_j \to u$ in $L^p_K(B_{r_\eps}(0)) \cap L^q_L(B_{r_\eps}(0))$ for a further subsequence. Then it follows from \eqref{11} and \eqref{12} that $u_j \to u$ in $L^p_K(\R^N) \cap L^q_L(\R^N)$, since they are uniformly convex spaces.
\end{proof}

\begin{proof}[Proof of Proposition \ref{Proposition 2}]
Let $\seq{u_j}$ be a \PS{c} sequence. First we show that $\seq{u_j}$ is bounded in $X$. We have
\begin{equation} \label{13}
\Phi(u_j) = \frac{1}{p}\, \norm{u_j}^p - \int_{\R^N} \left(\frac{\lambda}{p}\, K(x)\, |u_j|^p + \frac{\mu}{q}\, L(x)\, |u_j|^q + \frac{1}{p_s^\ast}\, |u_j|^{p_s^\ast}\right) dx = c + \o(1)
\end{equation}
and
\begin{multline} \label{28}
\Phi'(u_j)\, v = \int_{\R^{2N}} \frac{|u_j(x) - u_j(y)|^{p-2}\, (u_j(x) - u_j(y))\, (v(x) - v(y))}{|x - y|^{N+sp}}\, dx dy\\[5pt]
+ \int_{\R^N} V(x)\, |u_j|^{p-2}\, u_j\, v\, dx - \int_{\R^N} \left(\lambda K(x)\, |u_j|^{p-2}\, u_j + \mu L(x)\, |u_j|^{q-2}\, u_j + |u_j|^{p_s^\ast - 2}\, u_j\right) v\, dx\\[5pt]
= \o(\norm{v}) \quad \forall v \in X,
\end{multline}
in particular,
\begin{equation} \label{14}
\Phi'(u_j)\, u_j = \norm{u_j}^p - \int_{\R^N} \left(\lambda K(x)\, |u_j|^p + \mu L(x)\, |u_j|^q + |u_j|^{p_s^\ast}\right) dx = \o(\norm{u_j}).
\end{equation}
By \eqref{13} and \eqref{14},
\[
\mu \left(\frac{1}{p} - \frac{1}{q}\right) \int_{\R^N} L(x)\, |u_j|^q\, dx + \left(\frac{1}{p} - \frac{1}{p_s^\ast}\right) \int_{\R^N} |u_j|^{p_s^\ast}\, dx = \o(\norm{u_j}) + \O(1),
\]
which gives
\begin{equation} \label{15}
\int_{\R^N} L(x)\, |u_j|^q\, dx \le \o(\norm{u_j}) + \O(1), \qquad \int_{\R^N} |u_j|^{p_s^\ast}\, dx \le \o(\norm{u_j}) + \O(1)
\end{equation}
since $\mu > 0$ and $p < q < p_s^\ast$. By $(H_2)$, there exists $r > 0$ such that $\lambda K(x) \le V(x)/2$ for all $x \in B_r(0)^c$ and hence
\begin{equation} \label{16}
\int_{B_r(0)^c} \lambda K(x)\, |u_j|^p\, dx \le \half \int_{\R^N} V(x)\, |u_j|^p\, dx \le \half \norm{u_j}^p,
\end{equation}
and by the H\"{o}lder inequality,
\begin{equation} \label{17}
\int_{B_r(0)} \lambda K(x)\, |u_j|^p\, dx \le |\lambda| \pnorm[\infty]{K} \volume\, (B_r(0))^{sp/N} \left(\int_{\R^N} |u_j|^{p_s^\ast}\, dx\right)^{p/p_s^\ast},
\end{equation}
where $\pnorm[\infty]{\cdot}$ is the norm in $L^\infty(\R^N)$. Since $p > 1$, it follows from \eqref{14}--\,\eqref{17} that $\seq{u_j}$ is bounded.
So a renamed subsequence of $\seq{u_j}$ converges to some $u$ weakly in $X$, strongly in $L^p_K(\R^N) \cap L^q_L(\R^N)$ by Proposition \ref{Proposition 1}, and a.e.\! in $\R^N$. The sequence 
$
|u_j(x) - u_j(y)|^{p-2}\, (u_j(x) - u_j(y))/|x - y|^{(N+sp)/p'}
$ 
is bounded in $L^{p'}(\R^{2N})$ and it converges to $|u(x) - u(y)|^{p-2}\, (u(x) - u(y))/|x - y|^{(N+sp)/p'}$ almost everywhere in $\R^{2N}$. 
Moreover, $(v(x) - v(y))/|x - y|^{(N+sp)/p} \in L^p(\R^{2N})$, so the first integral in formula \eqref{28} converges to
\[
\int_{\R^{2N}} \frac{|u(x) - u(y)|^{p-2}\, (u(x) - u(y))\, (v(x) - v(y))}{|x - y|^{N+sp}}\, dx dy
\]
for a subsequence. Similarly,
\begin{gather*}
\int_{\R^N} V(x)\, |u_j|^{p-2}\, u_j\, v\, dx \to \int_{\R^N} V(x)\, |u|^{p-2}\, uv\, dx,\\[5pt]
\int_{\R^N} K(x)\, |u_j|^{p-2}\, u_j\, v\, dx \to \int_{\R^N} K(x)\, |u|^{p-2}\, uv\, dx,\\[5pt]
\int_{\R^N} L(x)\, |u_j|^{q-2}\, u_j\, v\, dx \to \int_{\R^N} L(x)\, |u|^{q-2}\, uv\, dx,\\[5pt]
\int_{\R^N} |u_j|^{p_s^\ast - 2}\, u_j\, v\, dx \to \int_{\R^N} |u|^{p_s^\ast - 2}\, uv\, dx
\end{gather*}
for a further subsequence. So passing to the limit in \eqref{28} shows that $u$ is a weak solution of equation \eqref{4}.
Suppose now that $u = 0$. Since $\seq{u_j}$ is bounded in $X$ and it converges to $0$ in $L^p_K(\R^N) \cap L^q_L(\R^N)$, \eqref{14} and \eqref{5} give
\[
\o(1) = \norm{u_j}^p - \int_{\R^N} |u_j|^{p_s^\ast}\, dx \ge \norm{u_j}^p \left(1 - \frac{\norm{u_j}^{p_s^\ast - p}}{S_V^{p_s^\ast/p}}\right).
\]
If $\norm{u_j} \to 0$, then $\Phi(u_j) \to 0$, contradicting $c \ne 0$, so this implies
\[
\norm{u_j}^p \ge S_V^{N/sp} + \o(1)
\]
for a further subsequence. Then \eqref{13} and \eqref{14} give
\[
c = \left(\frac{1}{p} - \frac{1}{p_s^\ast}\right) \norm{u_j}^p + \o(1) \ge \frac{s}{N}\, S_V^{N/sp} + \o(1),
\]
contradicting $c < \dfrac{s}{N}\, S_V^{N/sp}$.
This concludes the proof.
\end{proof}

\section{Proofs}

In this section we prove Theorems \ref{Theorem 1} and \ref{Theorem 2}.

\begin{proof}[Proof of Theorem \ref{Theorem 1}]
Fix $u_0 > 0$ in $X$ such that $\pnorm[p_s^\ast]{u_0} = 1$. Since $p < q < p_s^\ast$,
\[
\Phi^+(t u_0) = \frac{t^p}{p}\, \norm{u_0}^p - \frac{\lambda\, t^p}{p}\, \pnorm[p,K]{u_0}^p - \frac{\mu\, t^q}{q}\, \pnorm[q,L]{u_0}^q - \frac{t^{p_s^\ast}}{p_s^\ast} \to - \infty
\]
as $t \to + \infty$. Take $t_0 > 0$ so large that $\Phi^+(t_0 u_0) \le 0$, let
\[
\Gamma = \set{\gamma \in C([0,1],X) : \gamma(0) = 0,\, \gamma(1) = t_0 u_0}
\]
be the class of paths joining $0$ and $t_0 u_0$, and set
\[
c := \inf_{\gamma \in \Gamma}\, \max_{u \in \gamma([0,1])}\, \Phi^+(u).
\]
By Proposition \ref{Proposition 1}, we learn that
\begin{equation} \label{18}
T = \inf_{u \in X \setminus \set{0}}\, \frac{\norm{u}^p}{\pnorm[q,L]{u}^p} > 0.
\end{equation}
By formulas \eqref{6}, \eqref{18}, and \eqref{5}, we obtain
\[
\Phi^+(u) \ge \frac{1}{p} \left(1 - \frac{\lambda^+}{\lambda_1}\right) \norm{u}^p - \frac{\mu}{q}\, T^{-q/p} \norm{u}^q - \frac{1}{p_s^\ast}\, S_V^{- p_s^\ast/p} \norm{u}^{p_s^\ast} \quad \forall u \in X,
\]
where $\lambda^+ = \max \set{\lambda,0}$. Since $\lambda^+ < \lambda_1$ and $p_s^\ast > q > p$, it follows from this that $0$ is a strict local minimizer of $\Phi^+$, so $c > 0$. Thus, $\Phi^+$ has a \PS{c} sequence $\seq{u_j}$ by the Mountain Pass Theorem. Since $\gamma(s) = s t_0 u_0$ is a path in $\Gamma$,
\begin{multline*}
c \le \max_{s \in [0,1]}\, \Phi^+(s t_0 u_0) \le \max_{t \ge 0}\, \Phi^+(t u_0) \le \max_{t \ge 0}\, \left[\frac{t^p}{p} \left(\norm{u_0}^p - \lambda \pnorm[p,K]{u_0}^p\right) - \frac{\mu\, t^q}{q}\, \pnorm[q,L]{u_0}^q\right]\\[5pt]
= \left(\frac{1}{p} - \frac{1}{q}\right) \frac{\left(\norm{u_0}^p - \lambda \pnorm[p,K]{u_0}^p\right)^{q/(q-p)}}{\left(\mu \pnorm[q,L]{u_0}^q\right)^{p/(q-p)}} < \frac{s}{N}\, S_V^{N/sp}
\end{multline*}
if $\mu > 0$ is sufficiently large. An argument similar to that in the proof of Proposition \ref{Proposition 2} now shows that a subsequence of $\seq{u_j}$ converges weakly to a positive solution of equation \eqref{4}.
\end{proof}

\noindent
Turning to the proof of Theorem \ref{Theorem 2}, since $\lambda \ge \lambda_1$,
$\lambda_k \le \lambda < \lambda_{k+1}$ for some $k \ge 1$. By \eqref{9},
$i(\Psi^{\lambda_k}) = i(S \setminus \Psi_{\lambda_{k+1}}) = k.$
First, we construct a compact symmetric subset $A_0$ of $\Psi^{\lambda_k}$ with the same index. As we have already noted, the operator $A_p$ defined in \eqref{19} satisfies the structural assumptions of \cite[Chapter 1]{MR2640827}.

\begin{lemma}
The operator $A_p$ is strictly monotone, i.e.,
\[
(A_p(u) - A_p(v))\, (u - v) > 0
\]
for all $u \ne v$ in $X$.
\end{lemma}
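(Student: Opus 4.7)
The plan is to reduce the claim to the pointwise strict monotonicity of the scalar map $t \mapsto |t|^{p-2}\, t$ on $\R$, namely
\[
(|a|^{p-2}\, a - |b|^{p-2}\, b)(a - b) \ge 0 \quad \forall a, b \in \R,
\]
with equality if and only if $a = b$. This is a standard elementary inequality (for $p \ge 2$ it follows from convexity of $t \mapsto |t|^p/p$; for $1 < p < 2$ it follows from a direct computation, or from the well-known refined inequalities used in $p$-Laplacian theory). Assuming it for the moment, I would substitute this into the expression $(A_p(u) - A_p(v))(u - v)$ obtained by pairing the definition \eqref{19} with $v$ replaced by $u - v$ and subtracting.

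Concretely, the first term of $(A_p(u) - A_p(v))(u-v)$ is
\[
\int_{\R^{2N}} \frac{\bigl(|U|^{p-2} U - |W|^{p-2} W\bigr)(U - W)}{|x-y|^{N+sp}}\, dx\, dy,
\]
where $U = u(x) - u(y)$ and $W = v(x) - v(y)$, and the second term is
\[
\int_{\R^N} V(x)\, \bigl(|u|^{p-2} u - |v|^{p-2} v\bigr)(u - v)\, dx.
\]
By the pointwise inequality, both integrands are non-negative, hence $(A_p(u) - A_p(v))(u-v) \ge 0$. This gives monotonicity; the task is now to upgrade it to strict monotonicity.

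Suppose $(A_p(u) - A_p(v))(u - v) = 0$. Since the sum of two non-negative quantities vanishes, each integrand vanishes almost everywhere. From the second integrand, the pointwise equality case gives $V(x)\,(u(x) - v(x)) = 0$ for a.e.\! $x \in \R^N$. Here the hypothesis that $V$ is a \emph{positive} continuous function on $\R^N$ is crucial: it forces $u(x) = v(x)$ for a.e.\! $x \in \R^N$, i.e., $u = v$ in $X$, contradicting $u \ne v$. Therefore $(A_p(u) - A_p(v))(u - v) > 0$.

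The main (minor) obstacle is simply justifying the elementary inequality cleanly in the regime $1 < p < 2$, where one cannot argue by convexity of $|t|^p$ alone; the standard trick is to write the left-hand side as an integral $\int_0^1 \tfrac{d}{d\tau}\bigl[|b + \tau(a-b)|^{p-2}(b + \tau(a-b))\bigr](a-b)\, d\tau$ and observe strict positivity of the integrand whenever $a \ne b$. Beyond this, no compactness or functional-analytic subtlety is needed; the positivity of $V$ throughout $\R^N$ is what separates strict from non-strict monotonicity.
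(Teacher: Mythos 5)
Your proof is correct, but it takes a different route from the paper. The paper does not argue pointwise at all: it observes (from the H\"older estimate \eqref{27}) that $A_p(u)\,v \le \norm{u}^{p-1}\norm{v}$ for all $u,v\in X$, with equality only when $\alpha u=\beta v$ a.e.\ for some $\alpha,\beta\ge 0$ not both zero, and then invokes the abstract result \cite[Lemma 6.3]{MR2640827}, which converts this sharp inequality plus $(p-1)$-homogeneity into strict monotonicity. You instead prove the statement from scratch: you expand $(A_p(u)-A_p(v))(u-v)$ into the nonlocal and the potential term, use the scalar inequality $(|a|^{p-2}a-|b|^{p-2}b)(a-b)\ge 0$ with equality iff $a=b$, and then exploit that $V>0$ everywhere so that vanishing of the potential term already forces $u=v$ a.e.; this is sound, and in fact the nonlocal term is not even needed for strictness. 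Your approach is more elementary and self-contained (no appeal to the monograph), while the paper's is shorter and stays within the operator-theoretic framework of \cite{MR2640827} that it also uses for the type (S) property and the eigenvalue theory. One small remark: the case $1<p<2$ is not a genuine obstacle and needs no integral representation --- the map $\tau\mapsto|\tau|^{p-2}\tau=\mathrm{sgn}(\tau)\,|\tau|^{p-1}$ is strictly increasing on $\R$ for every $p>1$, which immediately gives the scalar inequality and its equality case.
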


\begin{proof}
It is easily seen from \eqref{27} that $A_p(u)\, v \le \norm{u}^{p-1} \norm{v}$ for all $u, v \in X$ and the equality holds if and only if $\alpha u = \beta v$ a.e.\! in $\R^N$ for some $\alpha, \beta \ge 0$, not both zero, so the conclusion follows from \cite[Lemma 6.3]{MR2640827}.
\end{proof}

\begin{lemma} \label{Lemma 6}
For each $w \in L^p_K(\R^N)$, the equation
\begin{equation} \label{29}
(- \Delta)_p^s\, u + V(x)\, |u|^{p-2}\, u = K(x)\, |w|^{p-2}\, w, \quad x \in \R^N
\end{equation}
admits a unique weak solution $u \in X$. Furthermore, the mapping $J : L^p_K(\R^N) \to X,\, w \mapsto u$ is continuous.
\end{lemma}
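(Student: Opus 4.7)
The plan is to exploit the operator formulation: equation \eqref{29} is equivalent to $A_p(u) = f_w$ in $X^\ast$, where the linear functional $f_w \in X^\ast$ is defined by $f_w(v) = \int_{\R^N} K(x)\, |w|^{p-2}\, w\, v\, dx$. This is indeed continuous on $X$ by H\"older's inequality in the weighted $L^p_K$ space combined with the continuous embedding $X \subset L^p_K(\R^N)$ furnished by Proposition \ref{Proposition 1}, yielding $|f_w(v)| \le \pnorm[p,K]{w}^{p-1}\, \pnorm[p,K]{v} \le C\, \pnorm[p,K]{w}^{p-1}\, \norm{v}$.

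For existence, I would minimize $F(u) = \frac{1}{p}\, \norm{u}^p - f_w(u)$ on $X$ by the direct method. The derivative $F'(u) = A_p(u) - f_w$ is strictly monotone by the preceding lemma, so $F$ is strictly convex; coercivity is immediate from the bound above, and weak lower semicontinuity follows from weak lower semicontinuity of $\norm{\cdot}^p$ on the reflexive space $X$ together with the (linear and continuous, hence weakly continuous) term $f_w$. The direct method thus yields a minimizer $u \in X$, which is the unique critical point of $F$ and hence the unique weak solution of \eqref{29}; uniqueness can alternatively be read off by testing the difference of two solutions against $u_1 - u_2$ and invoking strict monotonicity of $A_p$.

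For continuity of $J$, let $w_n \to w$ in $L^p_K(\R^N)$ and set $u_n = J(w_n)$. Testing \eqref{29} with $v = u_n$ gives $\norm{u_n}^p = f_{w_n}(u_n) \le C\, \pnorm[p,K]{w_n}^{p-1}\, \norm{u_n}$, so $(u_n)$ is bounded in $X$. Along a renamed subsequence, $u_n \wto \tilde u$ weakly in $X$ and $u_n \to \tilde u$ strongly in $L^p_K(\R^N)$ by the compactness in Proposition \ref{Proposition 1}. Then
\[
A_p(u_n)\, (u_n - \tilde u) = \int_{\R^N} K(x)\, |w_n|^{p-2}\, w_n\, (u_n - \tilde u)\, dx \to 0,
\]
since $\pnorm[p,K]{w_n}$ is bounded and $\pnorm[p,K]{u_n - \tilde u} \to 0$ by H\"older's inequality in the weighted space. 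The (S) property of $A_p$ then upgrades this to $u_n \to \tilde u$ strongly in $X$. Passing to the limit in the weak formulation identifies $\tilde u$ as the unique solution $J(w)$, and the usual subsequence argument promotes this to convergence of the full sequence.

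The main obstacle I anticipate is precisely this strong-convergence step in the continuity argument: strict monotonicity by itself is not enough, and one needs both the compact embedding into $L^p_K$ (to handle the factor $u_n - \tilde u$) and the convergence of the data $w_n$ in $L^p_K$ (to control the factor $|w_n|^{p-2}w_n$) to drive $A_p(u_n)(u_n - \tilde u)$ to zero, after which the (S) property of $A_p$ delivers the conclusion. Everything else reduces to standard monotone-operator and variational machinery once the functional setup from Section~2 is in place.
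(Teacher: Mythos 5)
Your proposal is correct and follows essentially the same route as the paper: a standard minimization argument for existence, strict monotonicity of $A_p$ for uniqueness, and, for the continuity of $J$, the a priori bound from testing with $u_n$, the compact embedding $X \hookrightarrow L^p_K(\R^N)$, and the (S) property of $A_p$, concluding with the subsequence argument. The only cosmetic difference is that you invoke the (S) property before identifying the weak limit (making the identification trivial by continuity of $A_p$), whereas the paper identifies the limit first via a.e.\ convergence as in Proposition \ref{Proposition 2} and then upgrades to strong convergence; both orderings are fine.
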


\begin{proof}
The existence follows from a standard minimization argument since $X$ is continuously embedded in $L^p_K(\R^N)$ by Proposition \ref{Proposition 1}, and the uniqueness is immediate from the strict monotonicity of the operator $A_p$. Let $w_j \to w$ in $L^p_K(\R^N)$ and let $u_j = J(w_j)$, so
\begin{equation} \label{30}
A_p(u_j)\, v = \int_{\R^N} K(x)\, |w_j|^{p-2}\, w_j\, v\, dx \quad \forall v \in X.
\end{equation}
Testing with $v = u_j$ gives
\[
\norm{u_j}^p = \int_{\R^N} K(x)\, |w_j|^{p-2}\, w_j\, u_j\, dx \le \pnorm[p,K]{w_j}^{p-1} \pnorm[p,K]{u_j}
\]
by the H\"{o}lder inequality, which together with the continuity of the embedding $X \hookrightarrow L^p_K(\R^N)$ shows that $\seq{u_j}$ is bounded in $X$. So a renamed subsequence of $\seq{u_j}$ converges to some $u$ weakly in $X$, strongly in $L^p_K(\R^N)$, and a.e.\! in $\R^N$. As in the proof of Proposition \ref{Proposition 2},
\[
A_p(u_j)\, v \to A_p(u)\, v, \qquad \int_{\R^N} K(x)\, |w_j|^{p-2}\, w_j\, v\, dx \to \int_{\R^N} K(x)\, |w|^{p-2}\, wv\, dx,
\]
along a subsequence. So passing to the limit in \eqref{30} shows that $u$ is a weak solution of equation \eqref{29} and hence $u = J(w)$. Testing \eqref{30} with $u_j - u$ gives
\[
A_p(u_j)\, (u_j - u) = \int_{\R^N} K(x)\, |w_j|^{p-2}\, w_j\, (u_j - u)\, dx \le \pnorm[p,K]{w_j}^{p-1} \pnorm[p,K]{u_j - u} \to 0,
\]
so $u_j \to u$ for a further subsequence since $A_p$ is of type (S).
\end{proof}

\begin{proposition} \label{Proposition 4}
If $\lambda_k < \lambda_{k+1}$, then $\Psi^{\lambda_k}$ has a compact symmetric subset $A_0$ with $i(A_0) = k$.
\end{proposition}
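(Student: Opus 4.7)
The plan is to push $\Psi^{\lambda_k}$ into a compact subset of itself by means of the odd, continuous, compact map $\tilde J := \pi \circ J : S \to S$, where $J$ is the solution operator from Lemma~\ref{Lemma 6} and $\pi : X \setminus \set{0} \to S$ is the radial projection, then close up. Since $\lambda_k < \lambda_{k+1}$, \eqref{9} already gives $i(\Psi^{\lambda_k}) = k$, so the only real issue is compactness: produce a compact subset of $\Psi^{\lambda_k}$ without losing index.

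First I would verify that $\tilde J$, restricted to $\Psi^{\lambda_k}$, is well-defined, odd, continuous, and sends $\Psi^{\lambda_k}$ into a relatively compact subset of $S$. Well-definedness holds because $u \in \Psi^{\lambda_k}$ forces $\pnorm[p,K]{u} \ge \lambda_k^{-1/p} > 0$, so the right-hand side of \eqref{29} is nontrivial and hence $J(u) \ne 0$. For the relative compactness, let $\seq{u_n} \subset \Psi^{\lambda_k}$; by Proposition~\ref{Proposition 1} a subsequence converges in $L^p_K(\R^N)$ to some $u$ with $\pnorm[p,K]{u} \ge \lambda_k^{-1/p}$, and Lemma~\ref{Lemma 6} gives $J(u_n) \to J(u) \ne 0$ in $X$, so $\tilde J(u_n) \to \tilde J(u)$ in $X$.

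The crucial analytic step is the monotonicity inequality $\Psi(\tilde J(u)) \le \Psi(u)$ for every $u \in S$, which guarantees that $\tilde J(\Psi^{\lambda_k}) \subset \Psi^{\lambda_k}$. For $u \in S$ set $v = J(u)$. Testing the identity $A_p(v)\,\phi = \int_{\R^N} K(x)\,|u|^{p-2}u\,\phi\,dx$ with $\phi = v$, then using \eqref{8} and H\"older in the weighted $L^p_K$-norm, yields
\[
\norm{v}^p \le \pnorm[p,K]{u}^{p-1}\,\pnorm[p,K]{v},
\]
while testing with $\phi = u$ and using \eqref{27} together with $\norm{u} = 1$ yields
\[
\pnorm[p,K]{u}^p \le \norm{v}^{p-1}.
\]
Eliminating $\norm{v}$ between the two inequalities gives $\pnorm[p,K]{v} \ge \pnorm[p,K]{u}^{(2p-1)/(p-1)}$, after which the first inequality, recast as $\norm{v}^p/\pnorm[p,K]{v}^p \le \pnorm[p,K]{u}^{p-1}/\pnorm[p,K]{v}^{p-1}$, produces $\Psi(\tilde J(u)) \le \pnorm[p,K]{u}^{-p} = \Psi(u)$.

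Setting $A_0 := \overline{\tilde J(\Psi^{\lambda_k})}$ then gives a compact symmetric subset of the closed set $\Psi^{\lambda_k}$. The odd continuous map $\tilde J : \Psi^{\lambda_k} \to A_0$ and the inclusion $A_0 \subset \Psi^{\lambda_k}$, combined with the monotonicity property $(i_2)$ in Proposition~\ref{Proposition 3}, squeeze $k = i(\Psi^{\lambda_k}) \le i(A_0) \le i(\Psi^{\lambda_k}) = k$, so $i(A_0) = k$. The main obstacle in this plan is the monotonicity inequality $\Psi \circ \tilde J \le \Psi$: once one has chosen the right pair of test functions ($\phi = v$ and $\phi = u$) it reduces to H\"older and exponent bookkeeping, but nothing else in the construction forces compactness of the image, which is exactly why Lemma~\ref{Lemma 6} together with Proposition~\ref{Proposition 1} is needed.
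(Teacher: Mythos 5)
Your proposal is correct and takes essentially the same approach as the paper: both build the compact set from the solution operator $J$ of Lemma \ref{Lemma 6} composed with a radial projection, prove via the same two test-function H\"older estimates (testing with $J$-image and with the datum) that the composite odd map does not increase $\Psi$, obtain compactness from the compact embedding $X \hookrightarrow L^p_K(\R^N)$ together with the continuity of $J$, and conclude $i(A_0)=k$ by using the monotonicity property \ref{i2} in both directions. The only cosmetic difference is that you work directly on $S$ and take a closure at the end, whereas the paper first normalizes on $S_{p,K}$ via $\pi_{p,K}$ and expresses the key inequality as $\norm{\pi_{p,K}(J(w))} \le \norm{w}$.
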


\begin{proof}
Let
\[
\pi_{p,K}(u) = \frac{u}{\pnorm[p,K]{u}}, \quad u \in X \setminus \set{0},
\]
be the radial projection onto $S_{p,K} = \bgset{u \in X : \pnorm[p,K]{u} = 1}$, and let
\[
A = \pi_{p,K}(\Psi^{\lambda_k}) = \bgset{w \in S_{p,K} : \norm{w}^p \le \lambda_k}.
\]
Then $i(A) = i(\Psi^{\lambda_k}) = k$ by \ref{i2} of Proposition \ref{Proposition 3} and \eqref{9}. For $w \in A$, let $u = J(w)$, where $J$ is the map defined in Lemma \ref{Lemma 6}, so
\[
A_p(u)\, v = \int_{\R^N} K(x)\, |w|^{p-2}\, wv\, dx, \quad \forall v \in X.
\]
Testing with $v = u, w$ and using the H\"{o}lder inequality gives
\[
\norm{u}^p \le \pnorm[p,K]{w}^{p-1} \pnorm[p,K]{u} = \pnorm[p,K]{u}, \qquad 1 = A_p(u)\, w \le \norm{u}^{p-1} \norm{w},
\]
so
\[
\norm{\pi_{p,K}(u)} = \frac{\norm{u}}{\pnorm[p,K]{u}} \le \norm{w}
\]
and hence $\pi_{p,K}(u) \in A$. Let $\widetilde{J} = \pi_{p,K} \comp J$ and let $\widetilde{A} = \widetilde{J}(A) \subset A$. Since the embedding $X \hookrightarrow L^p_K(\R^N)$ is compact by Proposition \ref{Proposition 1} and $\widetilde{J}$ is an odd continuous map from $L^p_K(\R^N)$ to $X$, then $\widetilde{A}$ is a compact set and $i(\widetilde{A}) = i(A) = k$. Let
\begin{equation} \label{26}
\pi(u) = \frac{u}{\norm{u}}, \quad u \in X \setminus \set{0}
\end{equation}
be the radial projection onto $S$ and let $A_0 = \pi(\widetilde{A})$. Therefore $A_0 \subset \Psi^{\lambda_k}$ is compact and $i(A_0) = i(\widetilde{A}) = k$.
\end{proof}

\noindent
We are now ready to prove Theorem \ref{Theorem 2}.

\begin{proof}[Proof of Theorem \ref{Theorem 2}]
We apply Theorem \ref{Theorem 3}. Since $\lambda \ge \lambda_1$, $\lambda_k \le \lambda < \lambda_{k+1}$ for some $k \ge 1$. Then $\Psi^{\lambda_k}$ has a compact symmetric subset $A_0$ with
\[
i(A_0) = k
\]
by Proposition \ref{Proposition 4}. We take $B_0 = \Psi_{\lambda_{k+1}}$, so that
\[
i(S \setminus B_0) = k
\]
by \eqref{9}. So \eqref{10} holds. For $u \in S$ and $t \ge 0$,
\begin{equation} \label{23}
\Phi(tu) = \frac{t^p}{p} \left(1 - \frac{\lambda}{\Psi(u)}\right) - \frac{\mu\, t^q}{q}\, \pnorm[q,L]{u}^q - \frac{t^{p_s^\ast}}{p_s^\ast}\, \pnorm[p_s^\ast]{u}^{p_s^\ast}.
\end{equation}
Pick any $v \in S \setminus A_0$. Since $A_0$ is compact, so is the set
\[
X_0 = \set{\pi((1 - t)\, u + tv) : u \in A_0,\, 0 \le t \le 1},
\]
where $\pi$ is as in \eqref{26}, and hence
\[
\alpha = \sup_{u \in X_0} \left(1 - \frac{\lambda}{\Psi(u)}\right) < \infty, \qquad \beta = \inf_{u \in X_0}\, \pnorm[q,L]{u}^q > 0.
\]
For $u \in A_0$, \eqref{23} gives
\begin{equation} \label{24}
\Phi(tu) \le - \frac{t^p}{p} \left(\frac{\lambda}{\lambda_k} - 1\right) \le 0
\end{equation}
since $\lambda \ge \lambda_k$. For $u \in X_0$, \eqref{23} gives
\begin{equation} \label{25}
\Phi(tu) \le \frac{\alpha\, t^p}{p} - \frac{\mu \beta\, t^q}{q} \le \left(\frac{1}{p} - \frac{1}{q}\right) \frac{(\alpha^+)^{q/(q-p)}}{(\mu \beta)^{p/(q-p)}},
\end{equation}
where $\alpha^+ = \max \set{\alpha,0}$. Fix $\mu > 0$ so large that the last expression is less than $(s/N) S_V^{N/sp}$, take positive $R \ge (q\, \alpha^+/p\, \mu \beta)^{1/(q-p)}$, and let $A$ and $D$ be as in Theorem \ref{Theorem 3}. Then it follows from \eqref{24} and \eqref{25} that
\[
\sup \Phi(A) \le 0, \qquad \sup \Phi(D) < \frac{s}{N}\, S_V^{N/sp}.
\]
Finally for $u \in B_0$, \eqref{23} gives
\[
\Phi(tu) \ge \frac{t^p}{p} \left(1 - \frac{\lambda}{\lambda_{k+1}}\right) - \frac{\mu\, t^q}{q}\, T^{-q/p} - \frac{t^{p_s^\ast}}{p_s^\ast}\, S_V^{- p_s^\ast/p},
\]
where $T$ is as in \eqref{18}. Since $\lambda < \lambda_{k+1}$ and $p_s^\ast > q > p$, it follows from this that if $0 < r < R$ is sufficiently small and $B$ is as in Theorem \ref{Theorem 3}, then
\[
\inf \Phi(B) > 0.
\]
Thus, $0 < c < (s/N) S_V^{N/sp}$ and $\Phi$ has a \PS{c} sequence by Theorem \ref{Theorem 3}, a subsequence of which converges weakly to a nontrivial solution of equation \eqref{4} by Proposition \ref{Proposition 2}.
\end{proof}

\medskip

\def\cdprime{$''$}

\end{document}